\newtheorem{thm}{Theorem}[section]
\newtheorem{lem}[thm]{Lemma}
\newtheorem{cor}[thm]{Corollary}
\theoremstyle{definition}
\theoremstyle{remark}
\newtheorem{remark}[thm]{Remark}
\numberwithin{equation}{section}
\begin{document}

\title{Positivity and Green's operators}

\author{David Raske}

\email{nonlinear.problem.solver@gmail.com}

\begin{abstract}
It is well known that positive Green's operators are not necessarily positivity preserving. In this paper we investigate the matter of just how far from being positivity preserving a positive Green's operator can be. We will also identify a broad class of Green's operators that are not necessarily positivity preserving but have properties related to positivity preservation that one expects from positivity preserving Green's operators.
\end{abstract}


\maketitle

\section{Introduction}

Let $n$ be a positive integer. Let $U$ be a bounded domain of $\mathbb{R}^n$ whose boundary $S$ consists of a finite number of smooth hypersurfaces. Consider the equation
\begin{equation}\label{eq 1}
\begin{aligned}
& A(x,\partial/\partial x)u(x)=f(x) \quad x \in U, \\
&B_j(x, \partial/\partial x)u(x)=0, \quad x \in S, \quad j=1,2,...\frac{m}{2}
\end{aligned}
\end{equation}
where $A$ is an elliptic operator of order $m$ and the boundary operators $\{B_j\}$ satisfy: (i) At every point $x$ of $S$, the normal derivative is not characteristic for any $B_j$; and (ii) the order $m_j$ of  $B_j$ is less than $m$, and $m_j \ne  m_k$. The domain $\mathcal{D}(A)$ of $A$ is defined by
\begin{equation}\label{eq 2}
\begin{aligned}
\mathcal{D}(A) = \{ & u(x) | u \in H^m(U) \quad \text{and} \\
&B_j(x,\partial/\partial x)u(x)=0 \quad \text{for} \quad x \in S,\\
&  j=1,2,...,\frac{m}{2}\}.
\end {aligned}
\end{equation}
We will call the ordered collection 
$$
(U, A, B_1,...,B_{m/2})
$$ 
admissable if and only if (i) $U$, $A$ and $\{B_j\}$ are defined as above, (ii) A is a one-to-one mapping from $\mathcal{D}(A)$ onto $L^2(U)$; and (iii) $A$ has the property that $A^{-1}$ is self-adjoint in $L^2(U)$.  The positive integer $m$ will always be the order of $A$, and the positive integer $n$ will always be the dimension of $\mathbb{R}^n$. We will call the inverse $G = A^{-1}$ a Green's operator. 

Green's operators received a considerable amount of attention in the twentieth century. In \cite{Schechter} it was shown that $G[u] \in H^m(U)$ if $u(x) \in L^2(U)$ and that $G[u]$ depends continuously on $u$. In particular, if $m > n/2$, then by Sobolev's theorem $G$ is a continuous mapping from $L^2(U)$ onto $C^0(\bar{U})$. In \cite{Garding} it was shown that $G$  is represented by an integral operator of Hilbert-Schmidt type. Namely, for any $f(x) \in L_2(U),$
\begin{equation}\label{Fubini}
\begin{aligned}
& (Gf)(x) = \int_U G(x,\xi) f(\xi) \, d\xi,  \\
&\int \int  |G(x,\xi)|^2 \,dx \, d\xi < +\infty.
\end{aligned}
\end{equation}
In general, the function $G(x,\xi)$, obtained by the kernel representation of the Green's operator $G$, is called a Green's function.

\section{higher order boundary value problems}

For second-order elliptic differential operators there is a nice co-incedence: if $A$ is a positive operator, and if $A^{-1}$ is self-adjoint in $L^2(U)$, then $A^{-1}$ is positivity preserving if homogeneous Dirichlet boundary conditions are used. Given this, it is  natural to ask whether or not positive Green's operators \footnote{Here, and for the remainder of the paper, we will say that a Green's operator is positive if and only if all of its eigenvalues are real and positive.} associated with higher-order elliptic differential operators and homogeneous  Dirichlet boundary conditions are positivity preserving. There are many counter examples to this conjecture, although some  Green's operators associated with higher-order elliptic differential operators and homogeneous Dirichlet boundary conditions are positivity preserving. One can still ask if the Green's operator has properties that are related to the positivity preserving property. Examples include whether or not the principal eigenvalue of $A$ is simple and whether or not the corresponding eigenfunction is of one sign. These questions have received quite a bit of attention. Again, some admisssable ordered collections $(U,A,B_1,...,B_{m/2})$  have these properties, some don't. (See \cite{GGS} and references therein for more on the history of these problems.) Equally surprising is the fact that there exists admissable ordered collections where $A^{-1}$ is positive and such that the solution of
\begin{equation}\label{BVP}
\begin{aligned}
& A(x,\partial/\partial x)u(x)=1 \quad x \in U, \\
&B_j(x,\partial/\partial x)u(x)=0, \quad x \in S, \quad j=1,2,...,\frac{m}{2}
\end{aligned}
\end{equation}
 is not non-negative. (See \cite{GS}.) Later in this paper we will see that this implies that it is possible that the mean value of a solution of 
\begin{equation}
\begin{aligned}
& A(x,\partial/\partial x)u(x)=f(x) \quad x \in U, \\
&B_j(x, \partial/\partial x)u(x)=0, \quad x \in S, \quad j=1,2,...,\frac{m}{2},
\end{aligned}
\end{equation}
where $f$ is a positive function, isn't necessarily non-negative. This, in turn, implies that there exists situations where the Green's operator $G$ is a positive operator, but we do not have $\int_U G(x,\xi) \, d\xi \geq 0$ for all $x$. 

One statement involving positivity that is true for all Green's operators associated with positive elliptic operators is as follows:


\begin{thm}
Suppose that the ordered collection
$$
(U, A, B_1,...,B_{m/2})
$$ 
is admissable, $A$ is positive, and $m > \frac{n}{2}$. Then we have 
 \begin{equation}\label{eq 3}
\int \int z(x) G(x,\xi)  z(\xi) \, dx \, d\xi \geq 0
\end{equation}
for all $z \in L^2(U)$, where $G(x,\xi)$ is the Green's function associated with the admissable ordered collection $$(U,A,B_1,...,B_{m/2}).$$
\end{thm}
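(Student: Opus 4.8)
The plan is to recognize the left-hand side of \eqref{eq 3} as the quadratic form of the operator $G$ and then to invoke the spectral theorem. The first step is to justify the identity
\[
\int \int z(x) G(x,\xi) z(\xi) \, dx \, d\xi = \langle z, Gz \rangle_{L^2(U)}.
\]
Since by \cite{Garding} the kernel satisfies $\int\int |G(x,\xi)|^2 \, dx \, d\xi < +\infty$, and since the function $(x,\xi) \mapsto z(x)z(\xi)$ lies in $L^2(U \times U)$ with norm $\|z\|_{L^2(U)}^2$, the integrand $z(x) G(x,\xi) z(\xi)$ is absolutely integrable on $U \times U$ by the Cauchy--Schwarz inequality. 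Fubini's theorem then applies --- this is the content of the representation \eqref{Fubini} --- and the double integral collapses to $\int_U z(x)\,(Gz)(x)\, dx = \langle z, Gz\rangle$. The hypothesis $m > n/2$ guarantees, via Sobolev's theorem as recorded in \cite{Schechter}, that $Gz$ is continuous, so no delicacy arises in evaluating the resulting integral.

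The second step is to record the structural properties of $G$. By admissability, $G = A^{-1}$ is self-adjoint in $L^2(U)$, and by \cite{Garding} it is of Hilbert--Schmidt type, hence compact. The spectral theorem for compact self-adjoint operators therefore supplies an orthonormal system of eigenfunctions $\{\phi_k\}$ with real eigenvalues $\{\mu_k\}$ and the expansion $Gz = \sum_k \mu_k \langle z, \phi_k \rangle \phi_k$ valid for every $z \in L^2(U)$. The third step brings in positivity: because $A$ is positive, its eigenvalues $\lambda_k$ are real and positive, and since $G = A^{-1}$ the eigenvalues of $G$ are $\mu_k = 1/\lambda_k > 0$. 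Writing $c_k = \langle z, \phi_k \rangle$ and combining these facts yields
\[
\int \int z(x) G(x,\xi) z(\xi) \, dx \, d\xi = \langle z, Gz \rangle = \sum_k \mu_k |c_k|^2 \geq 0,
\]
which is precisely \eqref{eq 3}. (Note that completeness of $\{\phi_k\}$ is not needed, since any component of $z$ lying in the kernel of $G$ contributes nothing to the sum; in fact injectivity of $A$ forces $G$ to have trivial kernel anyway.)

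I expect the only point requiring genuine care to be the first step, the identification of the double integral with the bilinear form $\langle z, Gz\rangle$, since everything afterward is a direct application of the spectral theorem. The absolute convergence furnished by the Hilbert--Schmidt bound is exactly what makes the use of Fubini legitimate and lets the argument run for all $z \in L^2(U)$ rather than only on a dense subclass. Beyond that, one should merely confirm the sign conventions --- that ``$A$ positive'' forces $\mu_k > 0$ and not merely $\mu_k \in \mathbb{R}$ --- which is immediate from $G = A^{-1}$.
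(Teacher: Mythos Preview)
Your proof is correct, but the paper takes a shorter and more elementary route. Rather than invoking the spectral theorem, the paper simply uses the positivity of $A$ as a quadratic form: $\int_U w(x)\, A w(x)\, dx \geq 0$ for every $w \in \mathcal{D}(A)$. Substituting $w = Gz$ (which lies in $\mathcal{D}(A)$ since $G$ maps $L^2(U)$ onto $\mathcal{D}(A)$) gives immediately $\int_U (Gz)(x)\, z(x)\, dx \geq 0$, i.e.\ $\langle z, Gz\rangle \geq 0$, and the kernel representation then yields \eqref{eq 3}. Your argument reaches the same inequality $\langle z, Gz\rangle \geq 0$ via the eigenfunction expansion $\langle z, Gz\rangle = \sum_k \mu_k |c_k|^2$ with $\mu_k = 1/\lambda_k > 0$. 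This is perfectly valid, and your careful justification of Fubini via the Hilbert--Schmidt bound is a genuine improvement in rigor over the paper's terse final sentence; but the spectral machinery is not needed, since the substitution $w = Gz$ converts positivity of $A$ into positivity of $G$ in one line. (Incidentally, your remark that $m > n/2$ makes $Gz$ continuous is correct but superfluous for this step: $Gz \in L^2(U)$ already suffices to make $\langle z, Gz\rangle$ well-defined.)
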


\begin{proof} First note that we have $\int_U w(x) A(x,\partial/\partial x) w(x) \, dx \geq 0$, for all $w$ in the domain of $A$ because we assume that $A > 0$. The positivity of $A$ also lets us conclude that a Green's operator $G$ exists, so let us write $w = G[z]$. Then we have it that $\int_U z(x)G[z](x) \, dx \geq 0$ for all $z \in L_2(U)$. Since $G(x,\xi)$ is obtained by the kernel representation of the operator $G$, we can write $\int \int z(x) G(x,\xi)  z(\xi) \, dx \, d\xi \geq 0$ for all $z \in L^2(U).$ This gives us the theorem.
\end{proof}

Some consequences of the above theorem are as follows:

\begin{cor} Suppose that the ordered collection
$$
(U, A, B_1,...,B_{m/2})
$$ 
is admissable, $A$ is positive, and $m > \frac{n}{2}$. Then we have that 
$$
\int \int G(x,\xi)  \, dx \, d\xi \geq 0
$$
\end{cor}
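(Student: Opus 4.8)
The plan is to deduce this directly from the preceding theorem by specializing the arbitrary test function $z \in L^2(U)$ to the constant function $z \equiv 1$. The first thing I would check is that this choice is legitimate, namely that the constant function lies in $L^2(U)$. This is precisely where the hypothesis that $U$ is a bounded domain does the work: since $|U| < \infty$, we have $\int_U 1 \, dx = |U| < +\infty$, so $1 \in L^2(U)$ and the theorem is applicable to it.

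With $z \equiv 1$ in hand, I would substitute into inequality (\ref{eq 3}). Because $z(x) = z(\xi) = 1$ identically, the weighted double integral collapses:
\begin{equation*}
\int \int z(x) G(x,\xi) z(\xi) \, dx \, d\xi = \int \int G(x,\xi) \, dx \, d\xi.
\end{equation*}
The theorem guarantees that the left-hand side is non-negative, so the same holds for the right-hand side, which is exactly the asserted inequality.

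There is essentially no obstacle to overcome here; the only point requiring any care is the boundedness of $U$, which is what ensures the constant function is square-integrable so that the theorem genuinely applies. I would add, however, one cautionary remark to forestall a natural misreading: the non-negativity of the fully averaged kernel $\int \int G(x,\xi) \, dx \, d\xi$ must not be conflated with the pointwise statement $\int_U G(x,\xi) \, d\xi \geq 0$ for each fixed $x$. As observed in the introduction, the latter can fail even when $A^{-1}$ is positive, whereas the former always holds under the stated hypotheses.
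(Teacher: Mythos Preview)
Your proof is correct and follows exactly the paper's own approach: specialize inequality~\eqref{eq 3} to $z \equiv 1$. Your added verification that $1 \in L^2(U)$ (using boundedness of $U$) and the cautionary remark distinguishing the double integral from the one-variable average are sound elaborations, but the core argument is identical.
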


\begin{proof} Recall \eqref{eq 3} and set $z = 1$. 
\end{proof}

\begin{remark}
Let $(U,A, B_1,...,B_{m/2})$ be an admissable ordered collection. Note that the above corollary implies that the mean value of the solution of \eqref{BVP} is positive if $A$ is a positive operator..
\end{remark}

\begin{cor} Suppose that the ordered collection
$$
(U, A, B_1,...,B_{m/2})
$$ 
is admissable, $A$ is positive, and $m > \frac{n}{2}$. then the solution of \eqref{eq 1} is positive somewhere on $U$ if $f$ is positive.
\end{cor}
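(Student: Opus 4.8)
The plan is to argue by contradiction, testing the inequality of the Theorem against $z=f$ itself. Write $u=G[f]$ for the solution of \eqref{eq 1}; since $m>\tfrac{n}{2}$, Sobolev's theorem guarantees $u\in C^0(\bar U)$, so that the assertion ``$u$ is positive somewhere'' is a meaningful pointwise statement rather than merely an almost-everywhere one. The first step is to pair the solution against its data: by the kernel representation \eqref{Fubini} together with Fubini's theorem,
\[
\int_U u(x)f(x)\,dx=\int_U\int_U f(x)\,G(x,\xi)\,f(\xi)\,d\xi\,dx,
\]
and the right-hand side is nonnegative by \eqref{eq 3} applied with $z=f$. Hence $\int_U uf\,dx\geq 0$.

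Next I would suppose, toward a contradiction, that $u$ is nowhere positive on $U$, i.e. $u(x)\leq 0$ for every $x\in U$. Because $f$ is positive, the product $uf$ is then $\leq 0$ throughout $U$, so $\int_U uf\,dx\leq 0$. Combined with the inequality of the previous step, this forces $\int_U uf\,dx=0$.

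Finally I would extract the contradiction. The integrand $uf$ is non-positive and has vanishing integral, so $uf=0$ almost everywhere; since $f>0$ almost everywhere, this yields $u=0$ almost everywhere, i.e. $u\equiv 0$ as an element of $L^2(U)$. But then $f=Au=0$, contradicting the positivity of $f$. Therefore $u$ must be positive somewhere on $U$, which is the assertion.

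The argument is short, and the only point demanding care is the concluding measure-theoretic step: passing from $\int_U uf\,dx=0$ with $uf\leq 0$ to $uf=0$ a.e., and then invoking $f>0$ to conclude $u\equiv 0$. This is precisely where the \emph{strict} positivity of $f$ is needed: if $f$ were merely assumed nonnegative and nontrivial, the same reasoning would only give $u=0$ on the support of $f$ and would not close the argument. I expect this to be the main (though mild) obstacle, the remaining steps being routine applications of \eqref{Fubini} and \eqref{eq 3}.
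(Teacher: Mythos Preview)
Your proof is correct and follows essentially the same approach as the paper: both apply \eqref{eq 3} with $z=f$ to obtain $\int_U f\,u\,dx\geq 0$ and then infer that $u=G[f]$ must be positive somewhere. The paper simply asserts the strict inequality $0<\int_U f\,u\,dx$ directly (implicitly relying on the positive-definiteness of $A$), whereas your contradiction argument via $u\equiv 0\Rightarrow f=Au=0$ makes that step explicit.
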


\begin{proof}
Note that our hypotheses allow us to write 
$$
0 < \int_U (\int_U f(x)  G(x,\xi)  f(\xi)\, \, d\xi )\, dx= \int_U f(x) (\int_U G(x,\xi) f(\xi)  \, d\xi) \, dx.
$$ 
The corollary follows.
\end{proof}

\section{Equivalence of three properties}

It is difficult to say much more about positivity preservation for arbitrary Green's operators, without adding additional hypotheses.
Given that we want $G(x,y) \geq 0$, but only have $\int \int G(x,y) \, dx \, dy \geq 0$ in general, it seems natural to ask questions about the case where $\int_U G(x,y) \, dy \geq 0$ for all $x \in U$. We reserve the remainder of the paper to investigating this property. First we will prove two lemmas relating the positivity of $\int_U G(x,y) \, dy$ to two different but important properties. 

\begin{lem} Let $(U,A, B_1,...,B_{m/2})$ be an admissable ordered collection. Suppose $m > n/2$, and let $G(x,y)$ be the Green's function associated with this admissable ordered collection. Let $f: U \rightarrow \mathbb{R}$ be a smooth, non-negative function and let the solution of \eqref{eq 1} be called $u_f$. Then we have $\int_U u_f(x) \, dx \geq 0$ iff $\int_U G(x,y) \, dy \geq 0$ for all $x \in U$. 
\end{lem}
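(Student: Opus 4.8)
The plan is to represent the solution explicitly through the Green's function and then transfer the integration onto the kernel by Fubini's theorem. Since $A$ is invertible with $G = A^{-1}$, the solution of \eqref{eq 1} is $u_f(x) = (Gf)(x) = \int_U G(x,\xi) f(\xi)\, d\xi$. Integrating over $U$ and applying Fubini's theorem, which is justified because $G(\cdot,\cdot) \in L^2(U \times U)$ by \eqref{Fubini} and $f$ is smooth on the bounded domain $U$, I would obtain
\[
\int_U u_f(x)\, dx = \int_U f(\xi) \Bigl( \int_U G(x,\xi)\, dx \Bigr) d\xi.
\]

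The key structural fact I would then use is the symmetry of the kernel. Because $A^{-1}$ is self-adjoint in $L^2(U)$, its integral kernel satisfies $G(x,\xi) = G(\xi,x)$; combined with $m > n/2$, Sobolev's theorem makes $G$ continuous so this identity holds pointwise. Hence $\int_U G(x,\xi)\, dx = \int_U G(\xi,x)\, dx = \int_U G(\xi,y)\, dy$, which is exactly the quantity appearing in the statement, now evaluated at $\xi$. Writing $H(\xi) := \int_U G(\xi,y)\, dy$, the identity above becomes $\int_U u_f(x)\, dx = \int_U f(\xi) H(\xi)\, d\xi$. I note in passing that $H = u_1$, the solution of \eqref{BVP}, and is therefore continuous on $\bar U$.

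For the ``if'' direction, if $H(\xi) \geq 0$ for every $\xi \in U$, then the integrand $f(\xi) H(\xi)$ is non-negative because $f \geq 0$, so $\int_U u_f\, dx \geq 0$ immediately, and this holds for every admissible $f$. The ``only if'' direction must be read with $f$ ranging over all smooth non-negative functions: the claim is that if $\int_U u_f\, dx \geq 0$ for all such $f$, then $H \geq 0$ everywhere. I would argue by contraposition. If $H(\xi_0) < 0$ at some point, then by continuity of $H$ there is a ball $B$ about $\xi_0$ on which $H < 0$; choosing $f$ to be a smooth non-negative bump function supported in $B$ and not identically zero gives $\int_U f H\, d\xi < 0$, hence $\int_U u_f\, dx < 0$, contradicting the hypothesis.

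The main obstacle I anticipate is twofold. First, one must pin down the correct quantified reading of the ``only if'' direction, since for a single fixed $f$ the non-negativity of $\int_U u_f\, dx$ tells us nothing about the sign of $H$ pointwise; the equivalence is genuinely between the universal statement over all admissible $f$ and the pointwise positivity of $H$. Second, one must secure the pointwise symmetry and continuity of $G$ that make both the Fubini interchange and the bump-function construction rigorous, and this is precisely where the hypotheses that $A^{-1}$ is self-adjoint and that $m > n/2$ enter the argument.
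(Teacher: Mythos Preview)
Your proof is correct and follows essentially the same approach as the paper: represent $\int_U u_f$ via the kernel, use Fubini and the symmetry of $G$ to reduce to $\int_U f(\xi) H(\xi)\,d\xi$ with $H(\xi)=\int_U G(\xi,y)\,dy$, then argue the ``if'' direction directly and the ``only if'' direction by continuity of $H$ and a bump function. Your justification of continuity (via $m>n/2$ and Sobolev, noting $H=u_1$) and your explicit discussion of the quantifier reading are in fact more careful than the paper's own treatment.
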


\begin{proof}

First note that we have

\begin{equation}
\int_U u_{f}(x) \,dx = \int_{U} ( \int_U  f(y) G(x,y) \, dy )\,  dx.
\end{equation}

Invoking the symmetry of $G(x,y)$, we can write
\begin{equation} 
\int_U u_{f}(x) \, dx = \int_{U} (\int_U f(y) G(y,x) \, dy) \, dx.
\end{equation}

We can now use Fubini's Theorem to obtain

\begin{equation}
\int_U u_{f}(x) \, dx = \int_{U} ( \int_U  f(y) G(y,x) \, dx) \, dy.
\end{equation}

This, in turn, allows us to write

\begin{equation}\label{eq}
\int_U u_{f}(x) \, dx = \int_{U} f(y) ( \int_U  G(y,x) \, dx) \, dy.
\end{equation}

Now, suppose that $\int_U G(x,y) \, dy \geq 0$ for all $x \in U$. This, in turn, allows us to conclude that $\int_U G(y,x) \, dx \geq 0$ for all $y \in U$.  Since $f$ is non-negative, we can now conclude that the right hand side of \eqref{eq} is non-negative. This, in turn, allows us to write $\int_U u_f(x) \, dx \geq 0$.

We now have $\int_U u_f(x)  \,  dx \geq 0$ if $\int_U G(x,y) \, dy \geq 0$ for all $x \in U$. It remains to show that there exists a smooth, non-negative function $f: U \rightarrow \mathbb{R}$ such that  $\int_U u_f(x)  \,  dx <  0$ if 
\begin{equation}\label{hypothesis}
\int_U G(x,y) \, dy < 0 \quad \text{for some} \quad x \in U.
\end{equation}
To see that this is the case, we first note that \eqref{hypothesis} implies that for some $y \in U$, $\int_U G(y,x) \, dx < 0$. Continuity then allows us to conclude that there exists an open ball $B$ in $U$ such that $\int_U G(y,x) \, dx < 0$ for all $y \in B$.   This,  in turn, allows us to find a smooth, non-negative function $f: U \rightarrow \mathbb{R}$ such that $\int_U f(y) (\int_U G(y,x) \, dx) \, dy < 0$. The lemma follows.
\end{proof}

Next we prove

\begin{lem} Let $(U,A, B_1,...,B_{m/2})$ be an admissable ordered collection. Suppose $m > n/2$, and let $G(x,y)$ be the Green's function associated with this admissable ordered collection. Then $\int_U G(x,y) \, dy \geq 0$ for all $x \in U$ iff the unique solution of the boundary value problem \eqref{BVP} is non-negative.
\end{lem}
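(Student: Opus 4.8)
The plan is to observe that the unique solution of \eqref{BVP} is nothing other than the function $x \mapsto \int_U G(x,y)\, dy$, which makes the claimed equivalence almost a tautology once this identification is justified. First I would specialize the right-hand side of \eqref{eq 1} to $f \equiv 1$; since $U$ is bounded, the constant function $1$ lies in $L^2(U)$, so the kernel representation \eqref{Fubini} applies and the solution $u$ of \eqref{BVP} is given by
\begin{equation}
u(x) = (G[1])(x) = \int_U G(x,y)\, dy.
\end{equation}

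Next I would invoke the hypothesis $m > n/2$: as recorded in the introduction, Sobolev's theorem guarantees that $G$ maps $L^2(U)$ continuously into $C^0(\bar{U})$, so $u \in C^0(\bar{U})$. This step is what allows the statement ``the solution is non-negative'' to be read as a genuine pointwise inequality for every $x \in U$, coinciding with its almost-everywhere version, so that comparing it with the condition $\int_U G(x,y)\, dy \geq 0$ for all $x$ is meaningful.

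With these two facts in hand the equivalence is immediate in both directions: since $u(x)$ and $\int_U G(x,y)\, dy$ are literally the same function of $x$, the inequality $\int_U G(x,y)\, dy \geq 0$ for all $x \in U$ holds if and only if $u(x) \geq 0$ for all $x \in U$, that is, if and only if the solution of \eqref{BVP} is non-negative.

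I do not expect any serious obstacle here; the only points requiring care are the two justifications above, namely that $f \equiv 1 \in L^2(U)$ so that the integral representation is valid (which uses the boundedness of $U$), and that the continuity of $u$ furnished by the Sobolev embedding permits treating non-negativity pointwise. Everything else reduces to the remark that the two quantities being compared are identical.
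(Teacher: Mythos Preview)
Your proposal is correct and follows essentially the same approach as the paper: both proofs rest on the identification $u(x)=G[1](x)=\int_U G(x,y)\,dy$, after which the equivalence is immediate. The paper splits the argument into two directions and, for the reverse implication, re-verifies that $w(x)=\int_U G(x,y)\,dy$ solves \eqref{BVP} by formally applying $A$ and using $A(x,\partial/\partial x)G(x,y)=\delta(x-y)$, whereas you obtain this directly from $G=A^{-1}$; your explicit invocation of the Sobolev embedding to justify the pointwise reading of non-negativity is a nice clarification that the paper leaves implicit.
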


\begin{proof} Suppose that the solution, $u$, of the boundary value problem \eqref{BVP} is non-negative on $U$. Note that our hypotheses allow us to assume that a Green's operator $G$ exists and that $u = G[1]$. Our hypotheses also guarantee that a Green's function $G(x,y)$ associated with $G$ exists, and we can write $u(x) = \int_U G(x,y) \, dy$. Recall that one of our hypotheses is that the solution of \eqref{BVP} is non-negative, so we have it that $\int_U G(x,y) \, dy \geq 0$ for all $x \in U$, if the unique solution of the boundary value problem \eqref{BVP} is non-negative .
 
Now, let us assume that $\int_U G(x,y) \, dy \geq 0$ for all $x \in U$, and let us write
$$
w(x) = \int_U G(x,y) \, dy.
$$
Notice that $w \in \mathcal{D}(A)$. Applying $A$ to both sides of the above equation gives us 
$$
A(x,\partial/\partial x) w(x) = \int_U A(x,\partial/\partial x)G(x,y) \,  dy = \int_U \delta(x-y) \, dy = 1.
$$
Since $w \in \mathcal{D}(A)$, we have it that $w$ is the solution of \eqref{BVP}. The lemma follows.
\end{proof}

With the above two lemmas in place, we can conclude that

\begin{thm} Let $m$ and $n$ be positive integers, and suppose that $m > n/2$. Let $(U,A,B_1,...,B_{m/2})$ be an admissable ordered collection. Then the following statements are equivalent to each other. (i) The solution of \eqref{BVP} is non-negative; (ii) $\int_U G(x,y) \, dy \geq 0$ for all $x \in U$, where $G(x,y)$ is the Green's function associated with the admissable ordered collection $(U,A, B_1,...,B_{m/2})$; and (iii) the mean value of the solution of \eqref{eq 1} is non-negative if $f$ is non-negative. \end{thm}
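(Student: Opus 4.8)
The plan is to read the theorem off directly from the two lemmas established just above, linking their biconditionals by transitivity; no fresh analysis is needed, so the work lies entirely in matching each lemma to the correct pair of statements. I would first dispatch the equivalence of (i) and (ii). The second of the two lemmas asserts exactly that $\int_U G(x,y)\,dy \geq 0$ for all $x \in U$ if and only if the unique solution of \eqref{BVP} is non-negative; since (i) is precisely the non-negativity of that solution and (ii) is precisely the pointwise sign condition on $\int_U G(x,y)\,dy$, that lemma yields (i) $\Leftrightarrow$ (ii) verbatim.

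Next I would handle (ii) $\Leftrightarrow$ (iii) using the first lemma, which relates the sign of $\int_U u_f(x)\,dx$ to the condition $\int_U G(x,y)\,dy \geq 0$ for all $x$. Here one must read the two directions of the lemma with some care: the forward direction shows that if (ii) holds then $\int_U u_f(x)\,dx \geq 0$ for every admissible $f$, while the reverse direction is an existence statement, producing a single $f$ for which $\int_U u_f(x)\,dx < 0$ whenever (ii) fails. Taken together, these say that (ii) holds if and only if $\int_U u_f(x)\,dx \geq 0$ for all non-negative $f$, and the latter is exactly statement (iii). Chaining the two equivalences then gives (i) $\Leftrightarrow$ (ii) $\Leftrightarrow$ (iii), which is the theorem.

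The one place I would be careful, and what I expect to be the only genuine obstacle, is the quantifier and regularity mismatch between statement (iii) and the first lemma. The lemma is phrased for a fixed smooth, non-negative $f$, whereas (iii) ranges over non-negative $f$ without an explicit smoothness hypothesis. I would therefore interpret (iii) in the sense of the lemma, as the assertion that the mean value is non-negative for all smooth non-negative $f$, noting that the bad function furnished in the reverse direction may be taken smooth, so it already witnesses the failure of (iii). If one insists on arbitrary non-negative $f \in L^2(U)$, the gap is closed by a routine density argument: since $G$ maps $L^2(U)$ continuously into $C^0(\bar U)$ under the hypothesis $m > n/2$, the functional $f \mapsto \int_U u_f(x)\,dx$ is continuous on $L^2(U)$, so non-negativity on the dense cone of smooth non-negative functions extends to the whole cone. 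With this understanding the transitivity argument goes through unchanged.
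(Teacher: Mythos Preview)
Your proposal is correct and follows the same approach as the paper, whose entire proof reads ``Combine Lemma 3.1 and Lemma 3.2.'' Your treatment is in fact more careful than the paper's, since you explicitly unpack the quantifier structure in Lemma 3.1 and address the smoothness/regularity gap in statement (iii) that the paper glosses over.
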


\begin{proof} Combine Lemma 3.1 and Lemma 3.2. \end{proof}

\begin{remark} Recall that there exists admissable ordered collections 
$$
(U,A,B_1,...,B_{m/2})
$$ 
with $m > n/2$ such that the solution of \eqref{BVP} is negative somewhere. Hence, the above theorem might be useful as a classification tool.
\end{remark}

\begin{remark} There is a whole separate highly developed theory concerning positivity preservation and higher order boundary value problems. (See \cite {GFS} for an example of how this theory works.) It produces bounds on the negative part of a Green's function associated with a higher order boundary value problem. A curious feature of this method is that it is usually easier to employ it when the Green's function $G(x,y) \rightarrow \infty$ as $y$ approaches $x$. The type of arguments used in this paper are easier to prove when the Green's function $G(x,y)$ remains bounded as $y$ approaches $x$.
\end{remark}

\bibliographystyle{amsplain}

\end{document}